\newcommand{\NN}{\mathbb{N}}
\newcommand{\PP}{\mathbb{P}}
\newcommand{\ZZ}{\mathbb{Z}}
\renewcommand{\bold}[1]{\mathbf{#1}}
\begin{document}
%%%%%%%%%%%%%%%%%%%%%%%%%%%%%%%%%%%%%%%%%%%%%%%%%

\title[Cond'l Prob. Multivariate Poisson]{Conditional Probabilities of Multivariate Poisson Distributions}
\author{Michael C. Burkhart}
\address{Rutgers University Department of Mathematics\\
Hill Center for the Mathematical Sciences  \\
110 Frelinghuysen Rd, Piscataway, NJ 08854-8019}
\subjclass[2010]{60--08, 05A15}

% 60--08  Probability theory and stochastic processes, general computational methods
% 05A15 Exact enumeration problems, generating functions

%%%%%%%%%%%%%%%%%%%%%%%%%%%%%%%%%%%%%%%%%%%%%%%%%

\theoremstyle{plain}
\newtheorem*{thm}{Theorem}
\newtheorem*{lem}{Lemma}
\newtheorem{prop}{Proposition}
\newtheorem*{cor}{Corollary}
\newtheorem{methd}{Method}

\theoremstyle{definition}
\newtheorem*{defn}{Definition}
\newtheorem*{conj}{Conjecture}
\newtheorem*{exmp}{Example}

\theoremstyle{remark}
\newtheorem*{rem}{Remark}
\newtheorem*{note}{Note}
\newtheorem*{case}{Case}

%%%%%%%%%%%%%%%%%%%%%%%%%%%%%%%%%%%%%%%%%%%%%%%%%

\begin{abstract}
Multivariate Poisson distributions have numerous applications.  Fast computation of these distributions, holding constant a fixed set of linear combinations of these variables, has been explored by Sontag and Zeilberger \cite{SZ10}.  This elaborates on their work.  
\end{abstract}

%%%%%%%%%%%%%%%%%%%%%%%%%%%%%%%%%%%%%%%%%%%%%%%%%
\maketitle

%%%%%%%%%%%%%%%%%%%%%%%%%%%%%%%%%%%%%%%%%%%%%%%%%

\section{Introduction}
Set $\NN = \ZZ_{\ge 0}$.  Let $ X_1,\dotsc,X_n$ be independent Poisson random variables with $X_i \sim Poisson(\lambda_i)$.  Let $\bold A \in \bold{Mat}_{m\times n}(\NN)$.  Define new random variables $Y_1, \dotsc, Y_m$ by taking linear combinations of the $X_i$:
\[
\left(\begin{smallmatrix} Y_1 \\ \vdots \\ Y_m \end{smallmatrix} \right)
= \bold A \left(\begin{smallmatrix} X_1 \\ \vdots \\ X_n \end{smallmatrix} \right)
\]

Then the joint p.d.f for $Y_1,\dotsc,Y_m$ is given:
\begin{multline*}
F_{\bold Y} (b_1,\dotsc,b_m)
= \PP(Y_1=b_1,\dotsc,Y_m=b_m) \\
= \sum_{\substack{(k_1,\dotsc,k_n)\in\NN^n \\ (b_1,\dotsc,b_m)^T=\bold A(k_1,\dotsc,k_n)^T}}
\frac{\lambda_1^{k_1}}{k_1!}\frac{\lambda_2^{k_2}}{k_2!}\dotsb \frac{\lambda_n^{k_n}}{k_n!} \ e^{-(\lambda_1+\lambda_2+\dotsb+\lambda_n)}
\end{multline*}

Sontag and Zeilberger demonstrated that the multivariate generating function of $\bold Y$:
\[
G_{\bold Y}(z_1,\dotsc, z_m)
= \sum_{(b_1,\dotsc, b_m) \in\NN^m} 
F_{\bold Y}(b_1,\dotsc,b_m)\ z_1^{b_1} \dotsb z_m^{b_m}
\]
may be simplified  \cite{SZ10} (\emph{cf.} \cite{Bur12}):
\[
G_{\bold Y}(z_1,\dotsc, z_m) = \exp\left(\sum_{j=1}^n \lambda_j \prod_{i=1}^m z_i^{a_{ij}}\right)
\]

Certain conditions can be placed on the matrix $\bold A$ to guarantee that the joint p.d.f. $F_{\bold Y}$ can be expressed as the sum over a single index (and thus practicably computed with Wilf-Zeilberger recurrences \cite{WZ92, AZ06}).  In particular, Sontag and Zeilberger established that $\bold A \in \bold{Mat}_{2\times n}(\{0,1\})$ is a sufficient criterion for reduction to a single-index sum \cite{SZ10}.  This paper will explore another sufficient criterion on the matrix $\bold A$ to guarantee that the joint p.d.f. can be expressed as the sum over a single index.

\section{Smith Normal Form}
Let $\bold A \in \bold{Mat}_{m\times n} (\NN)$.  Due to the nature of the problem, it may be assumed without loss that the rows of $\bold A$ are linearly independent.  In particular, this imposes $m\leq n$.

Any matrix $\bold A \in \bold{Mat}_{m\times n} (\ZZ)$ can be decomposed into its Smith normal form \cite[pp. 181-185]{Jac1}: i.e.: there exist invertible (over $\ZZ$) matrices $\bold P \in \bold{Mat}_{m\times m} (\ZZ)$ and $\bold Q \in \bold{Mat}_{n\times n} (\ZZ)$ such that:
\[
\left(\begin{smallmatrix} d_1 & & & & &  \\ & \ddots & & & & \\ & & & d_r & & \\ & & & & 0 & \\ & & & & & \ddots \end{smallmatrix}\right)_{m\times n}= \bold{PAQ}
\]
where $r$ is the rank of $\bold A$, $d_1, \dotsc, d_r \ne 0$, and $d_1 \mid \dotsb \mid d_r$.  These $d_i$, $1\leq i \leq r$ are unique up to sign-change and are called elementary divisors.  For $1\leq i \leq r$, let $\Delta_i$ denote the g.c.d. of the $i$-rowed minors of $\bold A$.  Then $d_1=\pm \Delta_1$, $d_2= \pm \Delta_2^{} \Delta_1^{-1}$, \dots, $ d_r=\pm \Delta_r ^{}\Delta_{r-1}^{-1}$.

\begin{thm}
If the matrix $\bold A\in \bold{Mat}_{m\times n}(\NN)$ has rank $r=m=n-1$ and elementary divisors $d_i=\pm 1$ for all $1\leq i \leq r$, then $F_{\bold Y}(k_1,\dotsc,k_m)$ can be written as the sum over a single index.
\end{thm}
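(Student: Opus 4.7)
The plan is to use the Smith normal form to parameterize the integer preimages $\bold A^{-1}(\bold b) \subset \ZZ^n$ by a single integer $t$, and then to check that the non-negative preimages correspond to a bounded interval in $t$. Writing $\bold P \bold A \bold Q = \bold D$ as in the previous paragraph and setting $\bold k' = \bold Q^{-1} \bold k$, the equation $\bold A \bold k = \bold b$ becomes $\bold D \bold k' = \bold P \bold b$. Because $r = m = n - 1$ and each $d_i = \pm 1$, the first $m$ coordinates of $\bold k'$ are determined over $\ZZ$ by $\bold b$ while the last coordinate is free. Thus $\bold k_0 := \bold Q (d_1^{-1}(\bold P \bold b)_1, \ldots, d_m^{-1}(\bold P \bold b)_m, 0)^T$ is a particular integer solution, $\bold v :=$ the last column of $\bold Q$ generates the integer kernel of $\bold A$, and every integer solution to $\bold A \bold k = \bold b$ has the form $\bold k_0 + t \bold v$ with $t \in \ZZ$.

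The main step is to verify that the constraint $\bold k_0 + t \bold v \in \NN^n$ cuts out a finite interval of consecutive integers---equivalently, that $\bold v$ has entries of both signs. I would extract this from the non-negativity of $\bold A$: if $\bold v \geq \bold 0$, then $\bold A \bold v = \bold 0$ forces $a_{ij} v_j = 0$ for all $i,j$, so every index $j$ with $v_j > 0$ would make column $j$ of $\bold A$ identically zero. Such columns correspond to variables $X_i$ that do not enter $\bold Y$ and may be discarded without loss (their $e^{-\lambda_i}$ factor in the joint density is compensated by the full sum $\sum_{k_i \geq 0} \lambda_i^{k_i}/k_i!$), so after this preliminary reduction $\bold v$ is sign-mixed. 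The allowable $t$ then form $[T_{\min}(\bold b), T_{\max}(\bold b)] \cap \ZZ$ with
\[
T_{\min}(\bold b) = \max_{j : v_j > 0} \lceil -(\bold k_0)_j / v_j \rceil, \qquad T_{\max}(\bold b) = \min_{j : v_j < 0} \lfloor -(\bold k_0)_j / v_j \rfloor.
\]

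Substituting this parameterization into the sum defining $F_{\bold Y}$ gives
\[
F_{\bold Y}(\bold b) = e^{-(\lambda_1 + \cdots + \lambda_n)} \sum_{t = T_{\min}(\bold b)}^{T_{\max}(\bold b)} \prod_{j=1}^n \frac{\lambda_j^{(\bold k_0)_j + t v_j}}{\bigl((\bold k_0)_j + t v_j\bigr)!},
\]
a sum of hypergeometric type over the single index $t$, of exactly the shape to which Wilf--Zeilberger recurrences apply. The only real obstacle I foresee is the sign-mixing argument for $\bold v$ (together with the preliminary reduction to matrices without zero columns); the remaining bookkeeping---reading off $\bold k_0$, $\bold v$, and the endpoints $T_{\min}, T_{\max}$ from $\bold P$ and $\bold Q$---is mechanical.
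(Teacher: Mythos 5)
Your proposal is correct and follows essentially the same route as the paper: use the Smith normal form $\bold{PAQ}=(\bold I_{n-1}\mid \bold 0)$ to parameterize all integer solutions of $\bold A\vec k=\vec b$ as a particular solution plus integer multiples of the last column of $\bold Q$, then sum over that single parameter. You in fact go further than the paper by proving that the admissible index set is a finite interval (via the sign-mixing of the kernel generator and the reduction discarding zero columns), a point the paper leaves implicit.
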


\begin{proof}
Note that:
\[
F_{\bold Y}(b_1,\dotsc,b_m)
= \sum_{\substack{(k_1,\dotsc, k_n) \in\NN^n \\ (b_1,\dotsc,b_m)^T=\bold A(k_1,\dotsc,k_n)^T }} 
\frac{\lambda_1^{k_1}}{k_1!}\frac{\lambda_2^{k_2}}{k_2!}\dotsb \frac{\lambda_n^{k_n}}{k_n!} \ e^{-(\lambda_1+\lambda_2+\dotsb+\lambda_n)}
\]
By hypothesis, there exist invertible matrices $\bold P \in \bold{Mat}_{n-1\times n-1} (\ZZ)$ and $\bold Q \in \bold{Mat}_{n\times n} (\ZZ)$ such that:
\[
\left(\bold I_{n-1} \big\vert  \bold 0_{n\times 1} \right) = \bold{PAQ}
\]
where $\bold I_{n-1}$ denotes the $(n-1)\times (n-1)$ identity matrix.  Then $(b_1,\dotsc,b_m)^T=\bold A(k_1,\dotsc,k_n)^T$ may be rewritten:
\[
\bold P^{-1} \left(\bold I_{n-1} \big\vert  \bold 0_{n\times 1} \right)  \bold Q^{-1} \left( \begin{matrix} k_1 \\ \vdots \\ k_n \end{matrix} \right)
=  \left( \begin{matrix} b_1 \\ \vdots \\ b_{m} \end{matrix} \right)
\]
 so that the first $n-1$ entries of the vector $\bold Q^{-1} (k_1,\dotsc,k_m)^T$ agree with the $(n-1)\times 1$ vector $\bold P (b_1,\dotsc,b_m)^T$.  In particular, by introducing $j\in\NN$ as the index of summation it follows that the full set of solutions to $\bold A (k_1,\dotsc, k_n)^T = (b_1,\dotsc,b_m)^T$ for $(k_1,\dotsc, k_n) \in\NN^n$ is given:
\[
\left( \begin{matrix} k_1 \\ \vdots \\ k_n \end{matrix} \right)
=  \bold{Q} \left( \begin{matrix} P_{11} & \dotsb & & P_{1n} \\ \vdots & & & \vdots \\ P_{m1} & \dotsb && P_{mn} \\ 0 & \dotsb & 0 &1 \end{matrix}  \right) \left( \begin{matrix} b_1 \\ \vdots \\ b_{n-1} \\ j \end{matrix} \right)
\]
Substituting $(k_1,\dotsc,k_n)$ as given above yields an expression for $F_{\bold Y}(b_1,\dotsc,b_m)$ as the summation over a single index $j$. 
\end{proof}

\begin{exmp}
Let:
\[
\bold A = \left(\begin{smallmatrix} 1 & 0 & 1 \\ 0 & 2 & 1 \end{smallmatrix}\right)
\]
Then:
\[
\left(\begin{smallmatrix} 1 & 0 & 0 \\ 0 & 1 & 0 \end{smallmatrix}\right) 
= \bold{PAQ} 
= \left(\begin{smallmatrix} 1 & 0 \\ 0 & 1 \end{smallmatrix}\right) \left(\begin{smallmatrix} 1 & 0 & 1 \\ 0 & 2 & 1 \end{smallmatrix}\right) \left(\begin{smallmatrix} 1 & -1 & 2 \\ 0 & 0 & 1  \\ 0 & 1 & -2 \end{smallmatrix}\right)
\]
So that $\bold A (k_1,\dotsc, k_n)^T = (b_1,\dotsc,b_m)^T$ has solutions:
\[
\left(\begin{smallmatrix} k_1 \\ k_2 \\  k_3 \end{smallmatrix}\right) 
= \bold {Q} \left(\begin{smallmatrix} \vec b \\ j \end{smallmatrix}\right) 
= \left(\begin{smallmatrix} 1 & -1 & 2 \\ 0 & 0 & 1  \\ 0 & 1 & -2 \end{smallmatrix}\right) \left(\begin{smallmatrix} b_1 \\  b_2 \\ j \end{smallmatrix}\right) 
=\left(\begin{smallmatrix} b_1-b_2+2j \\ j \\ b_2-2j \end{smallmatrix}\right)
\]
So that:

\begin{multline*}
F_{\bold Y}(b_1,b_2)
= \sum_{\substack {(k_1,k_2,k_3)\in\NN^3 \\ (b_1,b_2)^T=\bold A(k_1,k_2,k_3)^T}} 
\frac{\lambda_1^{k_1}}{k_1!}\frac{\lambda_2^{k_2}}{k_2!}\frac{\lambda_3^{k_3}}{k_3!} \ e^{-(\lambda_1+\lambda_2+\lambda_3)} \\
=\sum_{j} 
\frac{\lambda_1^{b_1-b_2+2j}\lambda_2^{j}\lambda_3^{b_2-2j}}{(b_1-b_2+2j)!j!(b_2-2j)!} e^{-(\lambda_1+\lambda_2+\lambda_3)}
\end{multline*}

\end{exmp}

\begin{exmp}
Let:
\[
\bold A = \left(\begin{smallmatrix} 1 & 3 & 2 & 2 \\ 5 & 16 & 12 & 17 \\ 3 & 16 & 21 & 56 \end{smallmatrix}\right)
\]
Then:
\[
\left(\begin{smallmatrix} 1 & 0 & 0 & 0 \\ 0 & 1 & 0 &0 \\ 0 & 0 & 1 &0 \end{smallmatrix}\right) 
= \bold{PAQ} 
= \left(\begin{smallmatrix} 1 & 0 & 0 \\ 0 & 1  & 0 \\ 0 & 0 & 1\end{smallmatrix}\right) \left(\begin{smallmatrix} 1 & 3 & 2 & 2 \\ 5 & 16 & 12 & 17 \\ 3 & 16 & 21 & 56 \end{smallmatrix}\right) \left(\begin{smallmatrix} 144 & -31 & 4 & 15 \\ -69 & 15 & -2 & -5  \\ 32 & -7 & 1 & -1 \\ 0 & 0 & 0 & 1 \end{smallmatrix}\right)
\]
So that $\bold A (k_1,\dotsc, k_4)^T = (b_1,\dotsc,b_3)^T$ has solutions:
\[
\vec k
= \left(\begin{smallmatrix} k_1 \\ k_2 \\  k_3 \\ k_4 \end{smallmatrix}\right) 
= \bold {Q} \left(\begin{smallmatrix} b_1 \\  b_2 \\ b_3 \\ j \end{smallmatrix}\right) 
= \left(\begin{smallmatrix} 144 & -31 & 4 & 15 \\ -69 & 15 & -2 & -5  \\ 32 & -7 & 1 & -1 \\ 0 & 0 & 0 & 1 \end{smallmatrix}\right) \left(\begin{smallmatrix} b_1 \\  b_2 \\ b_3 \\ j \end{smallmatrix}\right) 
=\left(\begin{smallmatrix} 144b_1-31b_2+4b_3+15j \\ -69b_1+15b_2-2b_3-5j \\ 32b_1-7b_2+b_3-j \\ j\end{smallmatrix}\right)
\]
So that:

\begin{multline*}
F_{\bold Y}(b_1,b_2,b_3)
= \sum_{\substack {(k_1,k_2,k_3,k_4)\in\NN^4 \\ (b_1,b_2,b_3)^T=\bold A(k_1,k_2,k_3,k_4)^T}} 
\frac{\lambda_1^{k_1}}{k_1!}\frac{\lambda_2^{k_2}}{k_2!}\frac{\lambda_3^{k_3}}{k_3!}\frac{\lambda_4^{k_4}}{k_4!} \ e^{-(\lambda_1+\lambda_2+\lambda_3+\lambda_4)} \\
=\sum_{j} 
\frac{\lambda_1^{144b_1-31b_2+4b_3+15j}\lambda_2^{-69b_1+15b_2-2b_3-5j}\lambda_3^{32b_1-7b_2+b_3-j}\lambda_4^j e^{-(\lambda_1+\lambda_2+\lambda_3+\lambda_4)}}{(144b_1-31b_2+4b_3+15j)!(-69b_1+15b_2-2b_3-5j)!(32b_1-7b_2+b_3-j)!j!} 
\end{multline*}

\end{exmp}

\section{Special Case: $\bold A$ is invertible}

\begin{prop}
If the matrix $\bold A\in \bold{Mat}_{n\times n}(\ZZ_{\ge 0})$ is invertible, then $F_{\bold Y}(\vec b)$ can be expressed explicitly.
\end{prop}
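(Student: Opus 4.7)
My plan is to exploit invertibility of $\bold A$ to collapse the defining summation immediately, with no auxiliary index $j$ required. First I would observe that since $\bold A\in\bold{Mat}_{n\times n}(\ZZ)$ is invertible, the linear system $\bold A\vec k=\vec b$ has a unique solution $\vec k=\bold A^{-1}\vec b$ in $\mathbb{Q}^n$. Consequently the set of tuples $(k_1,\dotsc,k_n)\in\NN^n$ indexing the sum for $F_{\bold Y}(\vec b)$ has cardinality at most one: it equals $\{\bold A^{-1}\vec b\}$ when $\bold A^{-1}\vec b\in\NN^n$, and is empty otherwise.

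Then I would record the resulting explicit formula. Writing $\vec k:=\bold A^{-1}\vec b$ where it makes sense, the expression should take the form
\[
F_{\bold Y}(\vec b)=
\begin{cases}
e^{-(\lambda_1+\dotsb+\lambda_n)}\displaystyle\prod_{i=1}^n \dfrac{\lambda_i^{k_i}}{k_i!}, & \bold A^{-1}\vec b\in\NN^n, \\
0, & \text{otherwise.}
\end{cases}
\]
Equivalently, via the Smith normal form $\bold{PAQ}=\operatorname{diag}(d_1,\dotsc,d_n)$ with all $d_i\ne 0$ (since $r=n$), one has $\vec k=\bold Q\operatorname{diag}(d_1^{-1},\dotsc,d_n^{-1})\bold P\vec b$, so the test $\vec k\in\NN^n$ amounts to componentwise divisibility of $\bold P\vec b$ by the $d_i$ together with nonnegativity. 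In the notation of the preceding theorem this is the boundary case $r=m=n$, in which the single summation index $j$ collapses to nothing.

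The only real obstacle I foresee is notational: the formula splits according to whether $\bold A^{-1}\vec b$ happens to lie in $\NN^n$, and no uniform algebraic expression smoothly handles both sub-cases. Adopting the convention $1/k!=0$ for $k\in\ZZ_{<0}$ would absorb the nonnegativity check into the factorials, but divisibility by the elementary divisors cannot be hidden in the same way, so a case split (or an explicit indicator on $\ZZ^n$) appears to be the cleanest honest presentation. Beyond this cosmetic point, no substantial difficulty is anticipated, since invertibility removes every nontrivial summation from the outset.
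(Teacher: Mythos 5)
Your proposal is correct and takes essentially the same route as the paper: invertibility of $\bold A$ forces the solution set of $\bold A\vec k=\vec b$ to contain at most one point, so the defining sum collapses to a single explicit term. You are in fact slightly more careful than the paper's own proof, which treats the solution set as a singleton without remarking on the case where $\bold A^{-1}\vec b\notin\NN^n$ and the probability is $0$.
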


\begin{proof}
Note that:
\[
F_{\bold Y}(\vec b)
= \sum_{\substack {\vec k \in \bold{Mat}_{n\times 1}(\ZZ_{\ge 0}) \\ \bold A \vec k = \vec b}} 
\frac{\lambda_1^{k_1}}{k_1!}\frac{\lambda_2^{k_2}}{k_2!}\dotsb \frac{\lambda_n^{k_n}}{k_n!} \ e^{-(\lambda_1+\lambda_2+\dotsb+\lambda_n)}
\]
But this sum is taken over the singleton:
\[
\{\vec k \in \bold{Mat}_{n\times 1}(\ZZ_{\ge 0}) : \bold A \vec k = \vec b \} = \{\vec k \in \bold{Mat}_{n\times 1} : \vec k = A^{-1} \vec b\}
\]
\end{proof}

\begin{exmp}
Let:
\[
\bold A = \left(\begin{smallmatrix} 1 & 5 & 3 \\ 2 & 10 & 5 \\ 0 & 1 & 8 \end{smallmatrix}\right)
\]
Then $\det(\bold A)=1$ and for any $\vec b = (b_1,b_2,b_3)^T \in  \bold{Mat}_{3\times 1}(\ZZ_{\ge 0})$, the unique solution to $\bold A \vec k = \vec b$ for $\vec k \in \bold{Mat}_{n\times 1}(\ZZ_{\ge 0})$ is given:
\[
\vec k 
= \left(\begin{smallmatrix} k_1 \\ k_2 \\  k_3 \end{smallmatrix}\right) 
= \bold A^{-1} \vec b 
= \left(\begin{smallmatrix} 75 & -37 & -5 \\ -16 & 8 & 1 \\ 2 & -1 & 0 \end{smallmatrix}\right) \left(\begin{smallmatrix} b_1\\ b_2 \\ b_3 \end{smallmatrix}\right)
= \left(\begin{smallmatrix} 75b_1-37b_2-5b_3 \\ -16b_1+8b_2+b_3 \\ 2b_1-b_2 \end{smallmatrix}\right)
\]
Thus the explicit probability density function for $\bold Y$ is given:
\[
F_{\bold Y}(\vec b)
= \frac{\lambda_1^{75b_1-37b_2-5b_3}\lambda_2^{-16b_1+8b_2+b_3}\lambda_3^{2b_1-b_2} e^{-(\lambda_1+\lambda_2+\lambda_3)}}{(75b_1-37b_2-5b_3)!(-16b_1+8b_2+b_3)!(2b_1-b_2)!}
\]
\end{exmp}

\section*{Acknowledgements}
This paper would not have been possible without the support and guidance of Dr. Eduardo Sontag.  Thanks are also due to Dr. Doron Zeilberger.  Supported in part by grant AFOSR FA9550-11-1-0247.

%%%%%%%%%%%%%%%%%%%%%%%%%%%%%%%%%%%%%%%%%%%%%%%%%
\bibliographystyle{amsalpha}
\bibliography{SmithNormal}
%%%%%%%%%%%%%%%%%%%%%%%%%%%%%%%%%%%%%%%%%%%%%%%%%

%%%%%%%%%%%%%%%%%%%%%%%%%%%%%%%%%%%%%%%%%%%%%%%%%
\end{document}